\newtheorem{corollary}{Corollary}
\newtheorem{lemma}{Lemma}
\newtheorem{theorem}{Theorem}
\newtheorem{remark}{Remark}
\long
\def\symbolfootnote[#1]#2{\begingroup
\def\thefootnote{\fnsymbol{footnote}}
\footnote[#1]{#2}\endgroup}
\begin{document}
\title{{A remark on the Isomorphism Conjectures}}
\date{}
\author{Crichton Ogle}
\address{ Department of Mathematics\\
The Ohio State University\\
Columbus, OH 43210\\
}
\email{ogle@math.ohio-state.edu}
\author{Shengkui Ye}
\address{Department of Mathematical Sciences\\
Xi'an Jiaotong-Liverpool University\\
Suzhou Industrial Park, Jiangsu Province\\
China 215123\\
}
\email{Shengkui.Ye@xjtlu.edu.cn}

\begin{abstract}
We show that for various natural classes of groups and appropriately defined 
$K$- and $L$-theoretic functors, injectivity or bijectivity of the assembly
map follows from the Isomorphism Conjecture being true for acyclic groups
lying within that class.
\end{abstract}

\maketitle

\symbolfootnote[0]{2010 \textit{Mathematics Subject Classification}. Primary
19Kxx; Secondary 57R67.} \symbolfootnote[0]{\textit{Key words and phrases}.
Baum-Connes Conjecture, Isomorphism Conjecture, acyclic groups.} \vskip.25in

\subsection*{Introduction}

A group $G$ is acyclic if the reduced homology $\tilde{H}_{\ast }(G;\mathbb{Z%
})=0$. It is well-known that every (torsion-free) group embeds as a subgroup
into a (torsion-free) acyclic group. It follows that Kaplansky's idempotent
conjecture (cf. \cite[p.~55]{mv}) holds for every torsion-free group if
and only it holds for every torsion-free acyclic group. Berrick, Chatterji
and Mislin \cite{bcm} prove that every (torsion-free) group $G$ embeds as a
subgroup into a (torsion-free) acyclic group $A(G)$ such that the conjugacy
relations are preserved, i.e. $g_{1}\underset{G}{\sim }g_{2}$ in $G$ if and
only if $g_{1}\underset{A(G)}{\sim }g_{2}$ in $A(G)$ for any two elements $%
g_{1},g_{2}\in G.$ This implies that the Bass conjecture (\cite{mv}, page
66) holds for any torsion-free group if and only if it holds for any
torsion-free acyclic group. In the note, we consider the isomorphism
conjectures, such as Baum-Connes conjecture and Farrell-Jones conjecture.
For more information on these conjectures, see Mislin-Vallete \cite{mv} and L%
\"{u}ck-Reich \cite{lr}. We prove that the fact that isomorphism conjectures
hold for any torsion-free acyclic group implies that the assembly maps are
injective for any torsion-free group. One interesting corollary is that the
isomorphism conjectures hold for any torsion-free group if and only the
assembly maps are surjective for any torsion-free group. \vskip.1in Note
that the isomorphism conjectures considered in this note are not the fibered
versions with coefficients (cf. \cite{blr}), which are stable under passage
to subgroups. Since every group embeds into an acyclic group, the
corresponding results for fibered isomorphism conjectures are obviously
true. \vskip.25in

\subsection*{Statement of results}

We will use the setup of \cite{dl}, with which we assume familiarity. For a
discrete group $G$, a set ${\mathcal{E}}$ of subgroups of $G$ is called a 
\emph{family of subgroups} if it is closed under conjugation and taking
subgroups. In other words, for any $H\in {\mathcal{E}},K\leq H$ and any $%
g\in G,$ we have $gHg^{-1}\in {\mathcal{C}}$ and $K\in {\mathcal{E}}$.
Typical examples of ${\mathcal{E}}$ are $\{1\}=$\{trivial subgroup\}; $%
\mathcal{F}in=$ \{finite subgroups\}; $\mathcal{VCY}=$ \{virtually cyclic
subgroups\}; $\mathcal{ALL}=$ \{all subgroups\}. For a family ${\mathcal{E}}$
of subgroups, the classifying space $E_{{\mathcal{E}}}(G)$ is uniquely
characterized up to equivariant homotopy by the property that the
fixed-point set $E_{{\mathcal{E}}}(G)^{H}$ is contractible for any $H\in {%
\mathcal{E}}$ and is empty for any $H\notin {\mathcal{E}}$. Let $H_{\ast
}^{G}(-;\mathbb{K}^{t})$ denote the equivariant homology theory associated
to the topological $K$-theory $\mathrm{Or}(G)$-spectrum $\mathbb{K}^{t}$.
Let $E_{\mathcal{F}in}(G)$ be the space classifying proper actions of $G$.
The Baum-Connes conjecture (as reformulated in \cite{dl}) asserts that the
assembly map 
\begin{equation}
H_{\ast }^{G}(E_{\mathcal{F}in}(G);\mathbb{K}^{t})\rightarrow K_{\ast
}^{t}(C_{r}^{\ast }(G))  \label{eqn:first}
\end{equation}%
is an isomorphism for all $\ast $, where the groups on the right are the
topological $K$-groups of the reduced $C^{\ast }$-algebra of $G$. We will
write BC for the Baum-Connes Conjecture, MBC resp.\ EBC for the statement
the assembly map in (\ref{eqn:first}) is a monomorphism resp.\ epimorphism ,
and $R$-BC (resp.\ $R$-MBC resp.\ $R$-EBC) for the conjecture that the
Baum-Connes assembly map becomes an isomorphism (resp.\ monomorphism resp.\
epimorphism) after tensoring both sides of (\ref{eqn:first}) with a subring $%
R\subseteq \mathbb{Q}$. Finally $R$-BC($G$) resp.\ $R$-MBC($G$) resp.\ $R$
-EBC($G$) will denote the conjecture that $R$-BC resp.\ $R$-MBC resp.\ $R$
-EBC holds for a particular group $G$. Let $\mathcal{G}$ be the class of all
groups. Given a subclass ${\mathcal{C}}\subset {\mathcal{ALL}}$, we say that 
$R$-IC, $R$-EC, or $R$-MC holds over $\mathcal{C}$ if the conjecture is true
for all groups in $\mathcal{C}$. The subclasses of interest here are: i) ${%
\mathcal{TF}}\subset {\mathcal{G}}$ consisting of all torsion-free discrete
groups and ii) $\mathcal{FF}\subset \mathcal{TF}$ the subcollection of
groups $G$ for which $BG\simeq X$ a finite complex (called $FF$ groups).

\begin{theorem}
Let $R$ be a subring of $\mathbb{Q}$. Let ${\mathcal{C}}={\mathcal{G}},{%
\mathcal{TF}}$ or ${\mathcal{FF}}$, the class of all groups, torsion-free
groups or groups with finite classifying spaces. If $R$-BC($G$) holds true
for all acyclic groups in $\mathcal{C}$, then $R$-MBC is true for all groups
in $\mathcal{C}$.
\end{theorem}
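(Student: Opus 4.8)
The plan is to deduce injectivity of the assembly map for an arbitrary $G\in\mathcal{C}$ by comparing it, through a homology equivalence, with the assembly map of an auxiliary group $\Gamma\in\mathcal{C}$ that is built entirely out of acyclic groups and for which the Isomorphism Conjecture is therefore available. Concretely, I would look for an injection $i\colon G\inj\Gamma$ with $\Gamma\in\mathcal{C}$ such that $Bi\colon BG\to B\Gamma$ is an $R$-homology isomorphism and $\Gamma$ is assembled (as an iterated amalgam / graph-- or complex --of groups) out of acyclic groups belonging to $\mathcal{C}$. Naturality of the assembly map for the inclusion $i$ then gives a commuting square
\[
\begin{CD}
H_*^G(E_{\mathcal{F}in}(G);\mathbb{K}^t)\otimes R @>{i_*}>> H_*^\Gamma(E_{\mathcal{F}in}(\Gamma);\mathbb{K}^t)\otimes R\\
@V{\alpha_G}VV @VV{\alpha_\Gamma}V\\
K_*^t(C_r^*(G))\otimes R @>{i_*}>> K_*^t(C_r^*(\Gamma))\otimes R
\end{CD}
\]
whose top horizontal map is an isomorphism because $Bi$ is an $R$-homology equivalence (for torsion-free $G$ the groups on the left are just $H_*(BG;\mathbb{K}^t)\otimes R$, which the Atiyah-Hirzebruch spectral sequence computes from $\widetilde H_*(BG)$). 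If I can show that $\alpha_\Gamma$ is an isomorphism, then $i_*\circ\alpha_G=\alpha_\Gamma\circ i_*$ is a composite of injections, hence injective, and therefore $\alpha_G$ is injective -- which is exactly $R$-MBC$(G)$. Note this route yields only injectivity, not surjectivity, since $i_*$ on the right need not be onto; this matches the statement.

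The isomorphism $\alpha_\Gamma$ should come from the hypothesis together with a Mayer-Vietoris argument. Since $\Gamma$ is built from acyclic groups $A_\sigma\in\mathcal{C}$, both functors $H_*^{(-)}(E_{\mathcal{F}in}(-);\mathbb{K}^t)$ and $K_*^t(C_r^*(-))$ decompose over the underlying diagram: on the left because equivariant homology is a homology theory, and on the right through the Pimsner six-term sequences attached to the amalgamated products and HNN extensions out of which $\Gamma$ is assembled. The assembly map is compatible with these sequences, so one obtains a map of Mayer-Vietoris sequences (or of homotopy-colimit spectral sequences). By $R$-BC each $\alpha_{A_\sigma}$, and each assembly map for the acyclic edge and intersection groups, is an isomorphism; a five-lemma / spectral-sequence comparison then propagates this to $\alpha_\Gamma$. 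For the class $\mathcal{G}$, where acyclic groups may carry torsion, I would run this with the full $E_{\mathcal{F}in}$-homology rather than with $BG$, using that $E_{\mathcal{F}in}(\Gamma)$ restricts compatibly along the relevant inclusions.

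The construction of $\Gamma$ is a Kan-Thurston-type realization: one models $BG$, up to $R$-homology, as a homotopy colimit of classifying spaces of acyclic groups indexed by the cells of a CW model of $BG$, and takes $\Gamma$ to be the resulting fundamental group, an iterated amalgam of acyclic groups. Keeping $\Gamma$ inside the prescribed class is where the three cases diverge: for $\mathcal{TF}$ one must use torsion-free acyclic groups and check that the amalgams remain torsion-free, and for $\mathcal{FL}$ one must both start from a finite model of $BG$ and use acyclic groups that are themselves of type $FL$ (finite aspherical acyclic complexes), so that $B\Gamma$ stays a finite complex.

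The main obstacle I expect is the reduced $C^*$-algebraic side of the comparison. The clean part of the argument uses naturality of assembly only for the injection $i$ and for the inclusions of the acyclic pieces, so reduced functoriality is available; but assembling $\alpha_\Gamma$ from the $\alpha_{A_\sigma}$ requires that $K_*^t(C_r^*(-))$ genuinely satisfy the Pimsner Mayer-Vietoris sequences in a manner compatible with the assembly map over the entire complex of groups, and that these sequences match the homology-theory decomposition on the left cell by cell. Verifying this compatibility -- together with the finiteness bookkeeping needed to remain in $\mathcal{FL}$ -- is the technical heart; once it is in place, the five-lemma comparison and the homology equivalence deliver injectivity for $G$ essentially formally.
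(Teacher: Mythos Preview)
Your high-level strategy --- pass from $G$ to a larger group with the same equivariant homology, deduce the Isomorphism Conjecture there, and pull injectivity back through the naturality square --- is exactly the paper's. But the specific construction you propose cannot work, and the paper's actual construction involves an idea you are missing.

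The gap is this: you want $\Gamma$ to be an iterated amalgam whose vertex \emph{and} edge groups are all acyclic, and simultaneously to have $Bi\colon BG\to B\Gamma$ an $R$-homology equivalence. These two requirements are incompatible. A Mayer--Vietoris (or complex-of-groups spectral sequence) computation shows that any iterated amalgam of acyclic groups along acyclic edge groups is itself acyclic; so $H_*(B\Gamma)\cong H_*(\mathrm{pt})$, and the homology equivalence would force $G$ itself to be acyclic. Relatedly, the Kan--Thurston construction you invoke produces a surjection $\Gamma\twoheadrightarrow G$, not the injection $G\hookrightarrow\Gamma$ you need; there is no evident way to extract such an injection from it.

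The paper's remedy is to separate the two roles. The comparison group is simply the product $A_1=G\times A(T(G))$ with an acyclic factor, so the inclusion $G\hookrightarrow A_1$ and the homology equivalence $BG\to BA_1$ are automatic. One then shows BC for $A_1$ not by decomposing $A_1$ into acyclic pieces, but by embedding it as a vertex group of a single amalgam
\[
A_3 \;=\; A_1 \ast_{T(G)} A_2,\qquad A_2=A(T(G)),
\]
arranged so that $A_3$ is acyclic (this is where the Kan--Thurston homology equivalence $T(G)\to G$ is actually used). In the resulting map of Mayer--Vietoris sequences the assembly maps for $A_2$ and $A_3$ are isomorphisms --- $A_3$ by the acyclic hypothesis --- but the edge group $T(G)$ is \emph{not} acyclic, and the hypothesis says nothing about it. The missing ingredient is that $T(G)$ (and $A_2$) lies in the Waldhausen--Cappell class $\mathfrak{C}$ of groups built from free groups by amalgams, HNN extensions and direct limits, and BC is known unconditionally for this class by Pimsner's theorem; for ${\mathcal C}={\mathcal{FL}}$ one instead uses Leary's CAT(0) Kan--Thurston and the Higson--Kasparov theorem. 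The five-lemma then gives BC for $A_1$, and injectivity for $G$ follows from the naturality square as you describe. In short, the argument does not run on the acyclic hypothesis alone: it needs an independently established isomorphism result for a suitable class containing the (non-acyclic) Kan--Thurston group.
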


The assembly map considered above is a special case of a much more general
construction. For suitably defined functors $F$ on the class $\mathcal{G}$
of discrete groups, one has an assembly map 
\begin{equation}
HF_{\ast }(G)\rightarrow F_{\ast }(G)  \label{eqn:second}
\end{equation}%
and the Isomorphism Conjecture (IC) \cite{dl} asserts that this map is an
isomorphism, where $HF_{\ast }(_{-})$ denotes the appropriate homology group
associated to $F$. For any $G$, there is a unique $G$-map from $E_{\mathcal{F%
}in}(G)$ to a point. If $\mathcal{H}_{\ast }$ is any equivariant homology
theory (cf. \cite{lr}), then the assembly conjecture for the triple $%
\mathcal{H}_{\ast },\mathcal{F}in$ and $G$ asserts that the induced map from 
$\mathcal{H}_{\ast }(E_{\mathcal{F}in}(G))$ to $\mathcal{H}_{\ast }(\mathrm{%
pt})$ is an isomorphism, where $\mathrm{pt}$ denotes a point with trivial $G$%
-action. Following the definitions given above, the \textit{\ Epimorphism
Conjecture} (EC) resp.\ \textit{Monomorphism Conjecture} (MC) for the theory
being considered states that the assembly map in (\ref{eqn:second}) is a
monomorphism resp.\ epimorphism. Again, given a subring $R\subset \mathbb{Q}$%
, the conjecture $R$-IC resp.\ $R$-EC resp. $R$-MC is the conjecture that
the assembly map is an isomorphism resp.\ epimorphism resp.\ monomorphism
after tensoring with $R$, with the appendage \textquotedblleft ($G$%
)\textquotedblright\ indicating the conjecture for a particular group $G$.

\begin{theorem}
Let $F_{\ast }(G)=L_{\ast }^{<-\infty >}(\mathbb{Z}[G])$, with $HF_{\ast
}(G):=H_{\ast }^{G}(E_{{\mathcal{F}}in}(G);\mathbb{L}^{<-\infty >}(\mathbb{Z}%
))$ the equivariant homology group associated to the algebraic $L$-theory $%
\mathrm{Or}(G)$-spectrum $\mathbb{L}^{<-\infty >}(\mathbb{Z})$. Let ${%
\mathcal{C}}={\mathcal{G}},{\mathcal{TF}}$ or ${\mathcal{FF}}$. Fix $%
R\subset \mathbb{Q}$. If $\frac{1}{2}\in R$ and $R$-IC($G$) is true for the
functor $F$ for all acyclic groups in $\mathcal{C}$, then $R$-IC holds for $F
$ over $\mathcal{C}$. If $\mathcal{C}\subseteq {\mathcal{T}F}$, the
implication holds without restriction on $R$. In particular, the Novikov
Conjecture holds for all groups in $\mathcal{C}$ if the assembly map for $F$
is a rational isomorphism for all acyclic $G\in {\mathcal{C}}$.
\end{theorem}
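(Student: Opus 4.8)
The plan is to deduce the general case from the value of the assembly map on the trivial group, using a functorial embedding of each group into an acyclic one together with the Mayer--Vietoris behaviour of both theories. Concretely, I would invoke the Baumslag--Dyer--Heller construction, which assigns to every group $G$ a functorial acyclic ``cone'' $cG$ containing $G$, and a ``suspension'' $\Sigma G = cG \ast_G cG$ (an amalgamated product of two cones along $G$) whose classifying space carries the integral homology of the reduced topological suspension of $BG$. For $\mathcal C = \mathcal G$ this machinery stays inside the class automatically; for $\mathcal C = \mathcal{TF}$ and $\mathcal C = \mathcal{FL}$ I would first arrange, by a suitable variant of the construction, that the cones and suspensions I use can be taken torsion-free, respectively of type $FL$, so that the standing hypothesis ($R$-IC for acyclic groups in $\mathcal C$) applies to them.

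The central object is the cofiber of the assembly map. Writing $\mathcal C(G)$ for the cofiber of $\alpha_G$ after tensoring with $R$, the conjecture $R$-IC($G$) is equivalent to $\mathcal C(G) \simeq \ast$. By naturality of assembly, $\mathcal C(-)$ inherits the formal features of both sides. The source $H^G_\ast(E_{\mathcal{F}in};\mathbb{L}^{<-\infty>}(\mathbb Z))$ is, for the groups in play, the $\mathbb L$-homology of $BG$, so it satisfies Mayer--Vietoris and the suspension isomorphism on the nose. The target $L^{<-\infty>}_\ast(\mathbb Z[G])$ satisfies Mayer--Vietoris for amalgamated products by Cappell's splitting theorem \emph{up to} the UNil summands; since these are $2$-primary, inverting $2$ (the hypothesis $\tfrac12\in R$) makes the target sequence exact and compatible with the homology side, and this is exactly the step that forces $\tfrac12\in R$ in general. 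When $\mathcal C\subseteq\mathcal{TF}$ I would instead arrange the amalgamating subgroup to be square-root closed in each cone, so that UNil vanishes integrally and the restriction on $R$ disappears.

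Granting this, the Mayer--Vietoris sequence for $\Sigma G = cG\ast_G cG$, combined with $\mathcal C(cG)\simeq\ast$ (the cone is acyclic and lies in $\mathcal C$), yields a suspension isomorphism $\mathcal C(\Sigma G)\simeq \Sigma\,\mathcal C(G)$: because $BcG$ is acyclic, both cone inclusions induce the augmentation on $\mathbb L$-homology, only the reduced part survives, and the boundary map identifies the cofibers up to a shift. The base case is the trivial group, which is acyclic, giving $\mathcal C(1)\simeq\ast$, while the cones supply the vanishing needed to run the suspension isomorphism for every $G$ at once. The aim is then to transport $G\mapsto \mathcal C(G)$ along the functorial construction to a reduced homology theory on spaces, $X\mapsto \mathcal C(TX)$ for a Kan--Thurston model $T$; such a theory is determined by its coefficients $\mathcal C(1)\simeq\ast$ and hence vanishes identically, so evaluating on $X=BG$ gives $\mathcal C(G)\simeq\ast$, i.e.\ $R$-IC($G$), for all $G\in\mathcal C$. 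The Novikov statement is then the rational injectivity contained in rational IC, which needs only the rational isomorphism hypothesis on acyclic groups.

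The hard part will be the final step: showing that $\{\mathcal C(G)\}$ genuinely assembles into a homology theory. The difficulty is that the target $L^{<-\infty>}_\ast(\mathbb Z[G])$ is not a priori a homotopy functor of $BG$ --- two groups with homology-equivalent classifying spaces can have different $L$-theory --- so homotopy invariance and excision for $\mathcal C(-)$ are not formal. Here the acyclicity hypothesis does the essential work: it forces the assembly cofiber to vanish on every acyclic group, hence to be ``homotopy-correct'' on precisely the pieces out of which the construction builds the suspension and Mayer--Vietoris data. Making this promotion rigorous --- reconciling the excision and suspension axioms for $\mathcal C(-)$ with the failure of homotopy invariance of $L$-theory, and controlling the UNil terms integrally in the torsion-free case --- is where essentially all the content lies; once it is in place, the conclusions for each class $\mathcal C$ and the Novikov corollary follow formally.
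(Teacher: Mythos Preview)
Your route differs substantially from the paper's, and the gap you flag at the end is exactly the step the paper's argument is designed to avoid.

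The paper does not try to promote the assembly cofiber to a homology theory. Instead it uses a \emph{single} amalgamated product and a five-lemma. For a given $G$ it first passes to the Kan--Thurston group $T(G)$ (Block's version, landing in the Waldhausen--Cappell class $\mathfrak C$; for $\mathcal{FL}$ it uses Leary's CAT(0) version instead), then sets
\[
A_1 = G\times A(T(G)),\qquad A_2 = A(T(G)),\qquad A_3 = A_1\ast_{T(G)} A_2,
\]
with $T(G)\hookrightarrow A_1$ via $g\mapsto (p_G(g),i_{T(G)}(g))$. A homology computation shows $A_3$ is acyclic. In the resulting Mayer--Vietoris ladder for $HF_\ast$ and $L^{<-\infty>}_\ast$, the assembly maps for $T(G)$ and $A_2$ are isomorphisms because these groups lie in $\mathfrak C$ (Cappell), and the one for $A_3$ is an isomorphism by hypothesis; the five-lemma then forces the assembly map for $A_1$ to be an isomorphism. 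Since $G$ is a retract of $A_1=G\times A(T(G))$ and the assembly map is natural for group homomorphisms, $R$-IC for $A_1$ immediately gives $R$-IC for $G$. The UNil terms are handled exactly as you say (invert $2$, or work with torsion-free groups).

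What this buys is that the paper never needs homotopy invariance of the cofiber or any iterated suspension: the product-with-an-acyclic-group trick produces a single group containing $G$ as a retract for which IC is already established. Your double-cone $\Sigma G=cG\ast_G cG$ gives only $\mathcal C(\Sigma G)\simeq\Sigma\,\mathcal C(G)$, and since $\Sigma G$ is not acyclic you cannot stop there; you are then forced into the homology-theory promotion, which, as you correctly observe, runs into the genuine obstacle that $L^{<-\infty>}_\ast(\mathbb Z[G])$ is not a homotopy functor of $BG$. That obstacle is not addressed in your sketch, and it is precisely what the paper's choice of $A_1$ sidesteps.
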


Let $KH(S)$ denote the homotopy $K$-theory spectrum of the discrete ring $S$%
, as defined by Weibel in \cite{cw2}.

\begin{theorem}
Let $F_{\ast }(G)=KH_{\ast }(\mathbb{Z}[G])$, with $HF_{\ast }(G):=H_{\ast
}^{G}(E_{{\mathcal{F}}in}(G);\mathbb{KH}(\mathbb{Z}))$. Let ${\mathcal{C}}={%
\mathcal{G}},{\mathcal{T}F}$ or ${\mathcal{FF}}$. Let $R$ be a subring of $%
\mathbb{Q}$. If $R$-IC holds for $F$ for all acyclic groups in $\mathcal{C}$%
, then $R$-IC holds for $F$ over $\mathcal{C}$.
\end{theorem}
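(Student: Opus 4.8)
The plan is to run, for homotopy $K$-theory, the same acyclic-reduction scheme that drives Theorem~2, exploiting the fact that $KH$ is the \emph{nil-invariant} form of algebraic $K$-theory. Two formal properties of the functor do all the work. First, $KH$ is homotopy invariant, so $KH_*(\mathbb{Z}[G][t])\cong KH_*(\mathbb{Z}[G])$; consequently the Bass--Heller--Swan / Waldhausen Mayer--Vietoris sequences that $KH$ satisfies for amalgamated products and HNN extensions of groups carry \emph{no Nil summands}. This is exactly the feature that, in contrast with $\mathbb{L}^{<-\infty>}$ (whose $UNil$-terms are $2$-torsion, forcing $\tfrac12\in R$) and with the topological $K$-theory of $C^*_r(-)$ (which lacks such clean excision, yielding only monomorphism), will let me reach a full isomorphism statement with no hypothesis on $R$. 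Second, both $F_*(G)=KH_*(\mathbb{Z}[G])$ and the equivariant homology theory $HF_*(G)=H^G_*(E_{\mathcal{F}in};\mathbb{KH}(\mathbb{Z}))$ commute with filtered colimits of groups, and the assembly map of (\ref{eqn:second}) is a natural transformation compatible with the induction maps, the colimits, and the Mayer--Vietoris sequences on both sides.

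Next I would fix, for each $G\in\mathcal{C}$, a functorial embedding $G\hookrightarrow A(G)$ into an acyclic group $A(G)$ lying in the same class, of the mitotic/binate type (Baumslag--Dyer--Heller, Berrick). The essential structural input is that the acyclicity of $A(G)$ is \emph{witnessed} by an HNN/Mayer--Vietoris presentation: there is an explicit ``doubling'' endomorphism $\theta$ of the relevant building block such that, after applying any equivariant homology theory, the map $\mathrm{id}-\theta_*$ is an isomorphism and thereby annihilates the reduced homology of the block. On the homology side acyclicity then gives $HF_*(A(G))\cong HF_*(\mathrm{pt})=KH_*(\mathbb{Z})$, and the hypothesis $R$-IC$(A(G))$ reads $KH_*(\mathbb{Z}[A(G)])\otimes R\cong KH_*(\mathbb{Z})\otimes R$.

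The heart of the argument is to promote acyclicity into a \emph{retraction of assembly maps}. Using the doubling-witnessed Mayer--Vietoris presentation together with the fact that the $KH$-sequence carries no Nil correction, I would exhibit the assembly map $\mathrm{asmb}_G\colon HF_*(G)\to F_*(G)$ as a retract, in the category of arrows, of $\mathrm{asmb}_{A(G)}$: the combinatorial endomorphism $\mathrm{id}-\theta_*$ furnishes compatible splittings on the homology side and on the $KH$-side, and these are intertwined by assembly. Since a retract of an $R$-isomorphism is an $R$-isomorphism, the hypothesis $R$-IC$(A(G))$ immediately yields $R$-IC$(G)$; passing to the filtered system defining $A(G)$ and invoking continuity of both sides closes the argument. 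Because no $2$-primary or Nil obstruction enters at any stage, the conclusion holds for every subring $R\subseteq\mathbb{Q}$ and uniformly over $\mathcal{G}$, $\mathcal{TF}$ and $\mathcal{FL}$.

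The main obstacle is twofold. The delicate point is that the implication must run \emph{backwards}, from $A(G)$ to $G$, rather than in the usual permanence direction: a naive embedding $G\hookrightarrow A(G)$ is useless by itself, since on the homology side the induced map is merely the split-surjective augmentation $HF_*(G)\to HF_*(\mathrm{pt})$ and loses precisely the reduced part $\widetilde{HF}_*(G)$ that carries the content of the conjecture. (For the topological $K$-theory of Theorem~1 the conditional expectation $C^*_r(A(G))\to C^*_r(G)$ splits the functor side but not this homological defect, which is exactly why one obtains only a monomorphism there.) One must therefore verify that the mitotic doubling splits $\mathrm{asmb}_G$ off $\mathrm{asmb}_{A(G)}$ compatibly on both sides at once, which is where nil-invariance of $KH$ is indispensable. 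The second, more technical, difficulty is keeping $A(G)$ inside $\mathcal{C}$: for $\mathcal{TF}$ one checks that the construction preserves torsion-freeness, while for $\mathcal{FL}$ one must embed through acyclic groups with finite classifying space, where functoriality and finiteness compete.
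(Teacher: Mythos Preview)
Your proposal correctly isolates the two formal properties of $KH$ that make the argument cleaner than in Theorems~1 and~2 (absence of Nil-terms in the Mayer--Vietoris sequences, and full functoriality in group homomorphisms), and you correctly flag the real difficulty: transferring the conjecture \emph{back} from the acyclic envelope to $G$. But your proposed resolution of that difficulty contains a genuine error. You claim that the mitotic doubling lets one exhibit $\mathrm{asmb}_G$ as a retract, in the arrow category, of $\mathrm{asmb}_{A(G)}$. This cannot hold: if $A(G)$ is acyclic and torsion-free then $HF_*(A(G))\cong H_*(\mathrm{pt};\mathbb{KH}(\mathbb{Z}))\cong KH_*(\mathbb{Z})$, so any purported retraction on the homology side would force $HF_*(G)$ to be a summand of $KH_*(\mathbb{Z})$ for \emph{every} $G$, which is absurd. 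The endomorphism $\mathrm{id}-\theta_*$ you invoke is exactly what annihilates reduced homology in the binate tower; it cannot simultaneously split off the reduced part $\widetilde{HF}_*(G)$ you want to recover. In short, the obstacle you name in the last paragraph is real, and your mechanism does not overcome it.

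The paper avoids this trap by not trying to retract $G$ off an acyclic group at all. Instead it first passes through a Kan--Thurston group $T(G)$ (lying in the Waldhausen--Cappell class $\mathfrak{C}$, where the $KH$-assembly map is already known to be an isomorphism by Bartels--L\"uck), forms the acyclic envelope $A(T(G))$ inside $\mathfrak{C}$, and sets $A_1=G\times A(T(G))$, $A_2=A(T(G))$, $A_3=A_1*_{T(G)}A_2$. A short Mayer--Vietoris computation shows $A_3$ is acyclic, so the hypothesis applies to $A_3$; the assembly maps for $T(G)$ and $A_2$ are isomorphisms because they lie in $\mathfrak{C}$; and a five-lemma in the commuting ladder of Mayer--Vietoris sequences forces the assembly map for $A_1$ to be an isomorphism. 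Now comes the retract, but in the right place: $G$ is a retract of $A_1=G\times A(T(G))$ via the obvious inclusion/projection, and since $KH$ is functorial for all group homomorphisms, $R$-IC$(A_1)$ implies $R$-IC$(G)$. For $\mathcal{FL}$ one replaces Block's envelope by Leary's metric Kan--Thurston construction to keep all groups of type $FL$ and CAT(0), and an additional lemma (that $\mathbb{K}(\mathbb{Z}[G])\simeq\mathbb{KH}(\mathbb{Z}[G])$ for such $G$, via vanishing of $N^pK_*$) closes the argument. Your outline contains none of this intermediate structure, and without it the backward passage from the acyclic group to $G$ does not go through.
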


For ordinary algebraic $K$-theory, a slightly weaker result can be shown.

\begin{theorem}
For a discrete ring $S$, set $FS_{\ast }(G)=K_{\ast }(S[G])$, with $%
HFS_{\ast }(G):=H_{\ast }^{G}(E_{{\mathcal{F}}in}(G);\mathbb{K}(S))$. Let ${%
\mathcal{C}}={\mathcal{G}}$ or ${\mathcal{TF}}$ and $R$ an arbitrary subring
of $\mathbb{Q}$.

\begin{enumerate}
\item If $\mathbb{Q}$-IC holds for $F\mathbb{Z}$ for all acyclic groups in $%
\mathcal{C}$, then $\mathbb{Q}$-MC holds for $F\mathbb{Z}$ over $\mathcal{C}$%
.

\item Let $S$ be a regular ring containing the rationals $\mathbb{Q}$. If $R$%
-IC holds for $FS$ for all acyclic groups in $\mathcal{C}$, then $R$-MC
holds for $FS$ over $\mathcal{C}$.

\item Let $S$ be a regular ring. If $R$-IC holds for $FS$ for all acyclic
groups in ${\mathcal{FF}}$, then $R$-MC holds for $FS$ over ${\mathcal{F}L}$.
\end{enumerate}
\end{theorem}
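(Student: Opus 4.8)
The plan is to adapt to ordinary $K$-theory the Mayer--Vietoris/acyclic-cone argument that underlies Theorems 1--3, and then to watch precisely where it degrades. Recall (Baumslag--Dyer--Heller) the functorial \emph{cone} $G\mapsto\kappa(G)$, sending each group to an acyclic group $\kappa(G)\supseteq G$ with $B\kappa(G)$ trivial in integral homology, and such that the amalgamated products $\kappa(G)\ast_G\kappa(G)$ realize the suspension $\Sigma BG$ on classifying spaces. First I would reduce to finitely generated $G$ (both $HFS_\ast$ and $FS_\ast(G)=K_\ast(S[G])$ commute with filtered colimits of groups), and then present $BG$ as an iterated homotopy pushout whose constituents are classifying spaces of acyclic cones $\kappa(-)$ and of the free/sphere groups obtained by coning, each pushout of groups being an amalgamation along a subgroup whose assembly map is already understood. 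Since the assembly map is natural, every such amalgamation produces a commuting ladder between the Mayer--Vietoris sequence for the equivariant homology theory $HFS_\ast$ and the corresponding sequence for $FS_\ast$.

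On the homology side $HFS_\ast$ is a genuine equivariant homology theory, so these ladders have exact rows; moreover acyclicity of $\kappa(G)$ gives $\widetilde{HFS}_\ast(\kappa(G))=0$, and the connecting homomorphisms are the suspension isomorphisms. The hypothesis that $R$-IC holds for every acyclic group in $\mathcal C$ is used exactly here: it forces $\widetilde{FS}_\ast(\kappa(G))\otimes R=0$ as well, so the corner terms of the two Mayer--Vietoris sequences agree after $\otimes R$. The essential new phenomenon is that $FS_\ast(-)=K_\ast(S[-])$ is only Mayer--Vietoris-exact \emph{up to Nil}: for an amalgam $\Gamma=\kappa(G)\ast_G\kappa(G)$, Waldhausen's theorem splits off a $\widetilde{\mathrm{Nil}}$-summand, $K_\ast(S[\Gamma])\cong(\text{Mayer--Vietoris part})\oplus\widetilde{\mathrm{Nil}}$. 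Feeding the hypothesis together with the base case of free groups (where the assembly map is classically split injective) into a five-lemma chase propagates the assembly map across each cell attachment, but only on the homology-exact part: the extra $\widetilde{\mathrm{Nil}}$ summand sits in the cokernel direction and obstructs surjectivity. Consequently what the induction preserves is injectivity, i.e.\ $R$-MC, rather than full $R$-IC; passing back up the filtered colimit yields $R$-MC for $G$.

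The three cases differ only in how one guarantees that the $\widetilde{\mathrm{Nil}}$-terms cannot feed into the kernel. In (2), regularity of $S$ together with $\mathbb Q\subseteq S$ makes the relevant Nil-groups vanish, so the chase goes through for arbitrary $R$ and for $\mathcal C=\mathcal G$ or $\mathcal{TF}$; in (1) one instead rationalizes over $S=\mathbb Z$, and $\otimes\mathbb Q$ is enough to keep the Nil contribution out of the kernel, giving $\mathbb Q$-MC. Case (3) is the most delicate: to remain inside $\mathcal{FL}$ one cannot use the infinitely generated cones $\kappa(G)$ verbatim, so the homotopy-pushout decomposition must be realized through acyclic groups of finite type, and it is regularity of $S$ that then permits discarding the Nil-terms in this finite setting. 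I expect the control of the Waldhausen Nil-groups to be the main obstacle throughout: they are the sole reason the conclusion weakens from $R$-IC to $R$-MC, they dictate the regularity and rationality hypotheses, and in (3) the genuine difficulty is to keep the auxiliary acyclic groups of type $FL$ while still being able to kill them.
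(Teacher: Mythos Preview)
Your sketch has the right shape (Mayer--Vietoris ladder with Waldhausen's Nil-terms, five-lemma chase, separate treatment of $\mathcal{FL}$), but there is a genuine gap at the heart of the argument: the amalgamation you form is along $G$ itself. In the ladder for $\kappa(G)\ast_G\kappa(G)$ the ``$\phi^T$'' column is the assembly map for $G$, which is precisely what you are trying to establish, so the chase is circular. Your attempt to escape this by an ``iterated homotopy pushout'' decomposition of $BG$ into free and acyclic pieces is not a concrete construction; attaching cells to a presentation complex does not produce amalgamated free products of groups, and no finite scheme of amalgamations over free subgroups will reach an arbitrary (even finitely generated) $G$.

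The paper's fix is to replace $G$ in the amalgamating slot by a group whose assembly map is already controlled. One first applies a functorial Kan--Thurston construction $G\mapsto T(G)$ (Block) with $p_G:T(G)\surj G$ a homology equivalence and $T(G)$ lying in the Waldhausen--Cappell class $\mathfrak C$; the acyclic envelope $A(T(G))$ can be taken in $\mathfrak C$ as well. One then forms a \emph{single} amalgam
\[
A_3=\bigl(G\times A(T(G))\bigr)\ast_{T(G)}A(T(G)).
\]
The point is that for groups in $\mathfrak C$ the Farrell--Jones assembly map is \emph{injective} whenever $\mathbb Q\subset S$, and rationally injective for $S=\mathbb Z$ (Bartels--L\"uck). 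Thus $\phi^T$ is injective, $\phi^2$ and $\phi^3$ are isomorphisms by the acyclic hypothesis, and the four-lemma gives $\phi^1$ injective; the inclusion $G\hookrightarrow A_1=G\times A(T(G))$ then yields $R$-MC$(G)$. Note that the Nil-terms do \emph{not} have to vanish: they sit as an extra summand in the target of $\phi^T$, and injectivity of the assembly map for $T(G)$ is all that the chase needs. So your discussion of ``Nil not feeding into the kernel'' is replaced by the single external input that assembly is injective for $\mathfrak C$-groups under the stated hypotheses on $S$; this is exactly what distinguishes cases (1) and (2).

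For (3) you correctly identify the issue---the acyclic envelope must stay in $\mathcal{FL}$---but you do not say how to arrange it. The paper uses Leary's metric Kan--Thurston theorem: for a finite complex $X_G\simeq BG$ one has a finite cubical CAT(0) complex $C(X_G)$ and, coning, $C(\widehat{X_G})$; set $A_2=\pi_1(C(\widehat{X_G}))$, $A_1=G\times A_2$, and amalgamate over $\pi_1(C(X_G))$. These groups are CAT(0) of type $FL$, and Wegner's theorem gives the full Farrell--Jones conjecture (hence in particular injectivity of $\phi^T$) for $\pi_1(C(X_G))$ with arbitrary regular coefficients. The same chase then finishes the proof.
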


\subsection*{Proof of Theorems 1, 2, and 3}

The proof in all cases is based on the method of \cite[\S 6.5]{jor}. For any
discrete group $G$, a classical construction allows us to embed $G$ in an
acyclic group $A(G)$ (its acyclic envelope), with the inclusion $%
i_{G}:G\hookrightarrow A(G)$ being functorial in $G$. Now the variation of
the Kan-Thurston construction detailed in \cite[Thm.~2.4]{jb} produces
a group $T(G)$ together with a surjective homomorphism $p_{G}:T(G)
\rightarrow G$ inducing an homology equivalence. The association $G\mapsto
T(G)$ is functorial in $G$; moreover $T(G)$ lies in the %
\underbar{Waldhausen-Cappell class} $\mathfrak{C}$ consisting of those
groups which can be constructed from free groups by i) amalgamated free
products, ii) HNN extensions, and iii) taking direct unions. Additionally,
as shown in \cite[Thm.~2.4]{jb}, starting with a group $G^{\prime }\in 
\mathfrak{C}$, the acyclic envelope $A(G^{\prime })$ can be formed so as to
remain inside of $\mathfrak{C}$. In the case $\mathcal{C}=\mathcal{G}$ or $%
\mathcal{TF}$, $A(T(G))$ will denote Block's construction of this envelope.
Let $A_{1}=G\times A(T(G))$, $A_{2}=A(T(G))$. There are inclusions 
\begin{gather}
T(G)\hookrightarrow A_{1},\,\,g\mapsto (p_{G}(g),i_{T(G)}(g)), \\
T(G)\hookrightarrow A_{2},\,\,g\mapsto i_{T(G)}(g).
\end{gather}
Let $A_{3}=A_{1}\underset{T(G)}{\ast }A_{2}$. By an application of
Mayer-Vietoris sequence, the group $A_{3}$ is acyclic.

In what follows, we will, for all of the functors considered above, write $%
HF_*(G)$ for $H_*^G(E_{{\mathcal{F}}in}G;\mathbb{F})$, where $\mathbb{F}$
denotes the $\mathrm{Or}(G)$-spectrum associated to $F$. There is a
homomorphism of sequences where the vertical arrows are given by assembly: 
\[
\xymatrixcolsep{.2in}
\xymatrix{
\dots \ar[r] & HF_{n+1}(A_3)\ar[r]^{\partial}\ar[d]^{\phi^3_{n+1}}
& HF_n(T(G))\ar[r]\ar[d]^{\phi^T_n} & HF_n(A_1)\oplus HF_n(A_2)\ar[r]\ar[d]^{\phi^1_n\oplus \phi^2_n} & HF_n(A_3)\ar[r]^{\partial}\ar[d]^{\phi^3_n}
& HF_{n-1}(T(G))\ar[r]\ar[d]^{\phi^T_{n-1}} &\dots\\
\dots \ar[r] & F_{n+1}(A_3)\ar[r]^{\partial}
& F_n(T(G))\ar[r] & F_n(A_1)\oplus F_n(A_2)\ar[r] & F_n(A_3)\ar[r]^{\partial} & F_{n-1}(T(G))\ar[r] &\dots
}
\]
\vspace{.1in}

As noted in \cite[p.~25]{mv}, the space $E_{\mathcal{F}in}(A_{3})$ is
equivalent (up to equivariant homotopy) to the homotopy push-out of the
diagram \vskip.2in 
\centerline{
\xymatrix{
A_{3}\underset{T(G)}{\times} E_{\mathcal{F}in}(T(G))\rto\dto &  A_{3}\underset{A_1}{\times}
E_{\mathcal{F}in}(A_{1}) \\ 
A_{3}\underset{A_2}{\times} E_{\mathcal{F}in}(A_{2}) &
}
} \vskip.2in by which one may derive the exactness of the top sequence for
coefficients in any $\mathrm{Or}(A_{3})$-spectrum. The commutativity of the
diagram, as well as the exactness of the bottom row, is the point that needs
to be verified. We consider first the case $\mathcal{C}=\mathcal{G}$ or $%
\mathcal{TF}$ for the functor $F_{\ast }(G)=K_{\ast }^{t}(C_{r}^{\ast }(G))$%
; here exactness of the bottom row follows by the results of Pimsner \cite%
{mp}, while the commutativity of the diagram has been shown by Oyono-Oyono 
\cite{oo}. As noted in \cite{jb}, the result of \cite{mp} implies $\phi
_{\ast }^{T}$ is an isomorphism. By the same reasoning, $\phi _{\ast }^{2}$
is an isomorphism, and $\phi _{\ast }^{3}$ is an isomorphism by hypothesis.
The five-lemma then implies $\phi _{\ast }^{1}$ must be an isomorphism as
well.

For a $\mathbb{Z}[\mathrm{Or}(G)]$-module $M$ and $G$-CW complex $X$, denote
by $H_{\ast }^{\mathrm{Or}(G)}(X;M)$ the Bredon homology of $X$ with
coefficients $M.$ Since the groups in $\mathfrak{C}$ are torsion-free, every
finite subgroup of $A_{1}$ is contained in $G$ and thus the family of finite
subgroups of $A_{1}$ is the same as that of $G$. Taking $M=\pi _{i}(\mathbb{K%
}^{top})$ viewed both as an $\mathbb{Z}[\mathrm{Or}(A_{1})]$-module and as
an $\mathbb{Z}[\mathrm{Or}(G)]$-module, one has isomorphisms 
\begin{eqnarray*}
H_{n}^{\mathrm{Or}(A_{1})}(E_{\mathcal{F}in}(A_{1});M) &\cong &H_{n}^{%
\mathrm{Or}(A_{1})}(E_{\mathcal{F}in}(G)\times E(A(T(G)));M) \\
&\cong &H_{n}^{\mathrm{Or}(G)}(E_{\mathcal{F}in}(G)\times \mathrm{B}%
A(T(G));M) \\
&\cong &H_{n}^{\mathrm{Or}(G)}(E_{\mathcal{F}in}(G);M).
\end{eqnarray*}%
By the equivariant Atiyah-Hirzebruch spectral sequence (cf. \cite{dl}),
there is an isomorphism 
\begin{equation*}
H_{n}^{A_{1}}(E_{\mathcal{F}in}(A_{1});\mathbb{K}^{top})\cong H_{n}^{G}(E_{%
\mathcal{F}in}(G);\mathbb{K}^{top}),\quad n\in \mathbb{Z}.
\end{equation*}%
Therefore, the inclusion map $G\rightarrow A_{1}$ induces an injection 
\begin{equation*}
\mathrm{Ker}(H_{n}^{G}(E_{\mathcal{F}in}(G);\mathbb{K}^{top})\rightarrow
K_{n}(C_{r}^{\ast }(G)))\subset \mathrm{Ker}(H_{n}^{A_{1}}(E_{\mathcal{F}%
in}(A_{1});\mathbb{K}^{top})\rightarrow K_{n}(C_{r}^{\ast }(A_{1}))).
\end{equation*}%
This implies that the assembly map $H_{n}^{G}(E_{\mathcal{F}in}(G);\mathbb{K}%
^{top})\rightarrow K_{n}(C_{r}^{\ast }(G))$ is injective, which completes
the proof of Theorem 1 for $R=\mathbb{Z}$. Tensoring with any ring flat over 
$\mathbb{Z}$ yields the same result for all $R\subset \mathbb{Q}$. \vskip.1in

For $\mathcal{C }= \mathcal{G}$ or $\mathcal{TF}$, the proofs of Theorems 2
and 3 follow exactly the same line of reasoning, after applying the
following modifications:

\begin{itemize}
\item In the case $F_*(G) = L^{<-\infty>}_*(\mathbb{Z}[G])$, the exactness
of the bottom row follows by the results of \cite{sc1}, the one complication
being the possible existence of $UNil$-terms. These terms vanish when
tensoring with any $R$ containing $\frac12$, or in the case the groups in
question are torsion-free. For this functor, the assembly map is an integral
isomorphism for groups in the class $\mathfrak{C}$ by \cite{sc1}, \cite{sc2}.

\item For $F_*(G) = KH_*(\mathbb{Z}[G])$, the corresponding results
(exactness of bottom row and equivalence of assembly map for $\mathfrak{C}$
-groups) has been shown in \cite{bl1}.

\item In both cases we have functoriality with respect to arbitrary group
homomorphisms, not just injective ones. The injection $G\rightarrowtail A_1$
of the first factor, the projection $A_1\twoheadrightarrow G$ onto the first
factor, and the naturality of the assembly map together allow us to conclude
that $R$-IC for the group $A_1$ implies $R$-IC for $G$.\hskip-.01in 
\footnote{%
In the case of the reduced $C^*$-algebra, it is unknown in general whether
the projection $A_1\twoheadrightarrow G$ defines an appropriate element of $%
KK(C^*_r(A_1),C^*_r(G))$. If it does, then the stronger conclusions of
Theorems 2 and 3 would apply as well to Theorem 1.}
\end{itemize}

\vskip.25in

We next consider the smaller class $\mathcal{FF}$. In order to duplicate the
above argument, the construction of the acyclic envelope requires
modification, as Block's construction does not preserve this class. Instead
(as in \cite{jor}), we use Leary's metric refinement of the Kan-Thurston
construction \cite{il}. To any complex $X$ Leary associates a locally CAT(0)
cubical complex $C(X)$ together with a map $p_{X}:C(X)\rightarrow X$ which
is an epimorphism on $\pi _{1}$ and an isomorphism in homology. The
association $X\mapsto (C(X),p_{X})$ is functorial in $X$; moreover if $X$ is
finite, so is $C(X)$. \vskip.1in

Let $G\in \mathcal{FF}$, and fix a finite basepointed complex $X_{G}$ with $%
X_{G}\simeq BG$. Let $\widehat{X_{G}}$ denote the cone on $X_{G}$; then the
canonical inclusion $X_{G}\hookrightarrow \widehat{X_{G}}$ is covered by an
inclusion of locally CAT(0) cubical complexes $C(X_{G})\hookrightarrow C(%
\widehat{X_{G}})$. Define the groups $A_{i},1\leq i\leq 3$ by 
\begin{gather*}
A_{1}:=G\times \pi _{1}(C(\widehat{X_{G}})); \\
A_{2}:=\pi _{1}(C(\widehat{X_{G}})); \\
A_{3}:=A_{1}\underset{\pi _{1}(C(X_{G}))}{\ast }A_{2},
\end{gather*}%
where $\pi _{1}(C(X_{G}))\hookrightarrow \pi _{1}(C(\widehat{X_{G}}))$ is
the inclusion of CAT(0)-groups\footnote{%
Leary shows that for any inclusion of complexes $X\hookrightarrow Y$, the
resulting inclusion $C(X)\hookrightarrow C(Y)$ is isometric and that the
image is a totally geodesic subcomplex of $C(Y)$, implying injectivity on $%
\pi _{1}$.} corresponding to the inclusion $X_{G}\hookrightarrow \widehat{%
X_{G}}$ and the inclusion $\pi _{1}(C(X_{G}))\hookrightarrow A_{1}$ is
similar to the inclusion $T(G)\hookrightarrow A_{1}$ defined in the first
paragraph of this proof. Writing $L_{\ast }^{<-\infty >}(\mathbb{Z}[H])$ as $%
L_{\ast }(\mathbb{Z}[H])$ and $H_{\ast }(BH;\mathbb{L}(\mathbb{Z}))$ simply
as $HL_{\ast }(BH)$, one has as before a commuting diagram of long-exact
sequences with the vertical maps induced by assembly: 
\[
%\xymatrixcolsep{.19in}
%\footnotesize{
\resizebox{6.5in}{.125in}{
\xymatrix{
\dots HL_{n+1}(BA_3)\ar[r]^{\partial}\ar[d]^{\psi^3_{n+1}}
& HL_n(B\pi_1(C(X_G)))\ar[r]\ar[d]^{\psi^C_n} & HL_n(BA_1)\oplus HL_n(BA_2)\ar[r]\ar[d]^{\psi^1_n\oplus \psi^2_n} & HL_n(BA_3)\ar[r]^(0.35){\partial}\ar[d]^{\psi^3_n}
& HL_{n-1}(B\pi_1(C(X_G)))\ar[d]^{\psi^C_{n-1}}\dots\\
\dots L_{n+1}(\mathbb Z[A_3])\ar[r]^{\partial}
& L_n(\mathbb Z[\pi_1(C(X_G))])\ar[r] & L_n(\mathbb Z[A_1])\oplus L_n(\mathbb Z[A_2])\ar[r] & L_n(\mathbb Z[A_3])\ar[r]^(0.35){\partial} & L_{n-1}(\mathbb Z[\pi_1(C(X_G))])\dots
}}
\]
\vspace{.1in}

Both $A_{2}$ and $\pi _{1}(C(X_{G}))$ are fundamental groups of finite
locally CAT(0) cubical complexes; it follows from the results of \cite{bl2}
that the assembly maps $\psi _{\ast }^{C}$ and $\psi _{\ast }^{2}$ are
isomorphisms. Moreover, $HL_{\ast }(BA_{1})\cong HL_{\ast }(BG)$, and so as
before one has an identification of kernels 
\begin{equation*}
\ker (\psi _{\ast }^{1})\cong \ker (HL_{\ast }(BG)\rightarrow L_{\ast }(%
\mathbb{Z}[G])).
\end{equation*}%
which, together with the injectivity of $\psi _{\ast }^{3}$ yields an
injection 
\begin{equation*}
\ker (HL_{\ast }(BG)\rightarrow L_{\ast }(\mathbb{Z}[G]))\cong \ker (\psi
_{\ast }^{1})\hookrightarrow {coker}(\psi _{\ast +1}^{3}).
\end{equation*}%
As all of the groups in the above diagram are objects in the category $%
\mathcal{FF}$, we arrive at the same conclusion as before. This completes
the proof of Theorem 2. In the case of the reduced group $C^{\ast }$-
algebra, the same argument for torsion-free groups applies, given that
groups acting properly on cubical CAT(0)-complexes satisfy the Haagarup
property \cite{NR}, and thus satisfy the Strong BC Conjecture by the work of
Higson-Kasparov \cite{hk}, which completes the proof of Theorem 1. \vskip.2in

Next we consider the statement of the third theorem when $\mathcal{C}=%
\mathcal{FF}$. For brevity, we say that $G$ satisfies condition $\mathcal{F}%
CAT$ if it acts properly, isometrically and cocompactly on a finite
dimensional CAT(0)-space\footnote{%
More precisely, one only needs $G$ to be in the class $\mathcal{B}$ as given
in \cite[Def.~1]{bl2} for this Lemma to apply.}.

\begin{lemma}
Suppose $G$ satisfies $\mathcal{F}CAT$. Then the natural transformation of
spectrum-valued functors $\mathbb{K}(_-)\to \mathbb{KH}(_-)$ from algebraic
to homotopy $K$-theory induces a weak equivalence 
\begin{equation*}
\mathbb{K}(\mathbb{Z}[G])\overset{\simeq}{\longrightarrow} \mathbb{KH}(%
\mathbb{Z}[G]).
\end{equation*}
\end{lemma}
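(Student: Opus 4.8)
The plan is to identify the homotopy cofiber of $\mathbb{K}(\mathbb{Z}[G])\to\mathbb{KH}(\mathbb{Z}[G])$ as trivial by exploiting the Farrell--Jones machinery that is available precisely because $G$ satisfies $\mathcal{F}CAT$. Recall (Weibel, \cite{cw}) that this map is a weak equivalence exactly when $\mathbb{Z}[G]$ is $K$-regular, so the content is the vanishing of all Nil-phenomena for $\mathbb{Z}[G]$. Rather than attacking $K$-regularity directly, I would deduce it from two assembly isomorphisms together with an equivalence of coefficient spectra.

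First I would record that a group satisfying $\mathcal{F}CAT$ lies in the Bartels--L\"uck class $\mathcal{B}$, so by \cite{bl2} it satisfies the Farrell--Jones Conjecture in algebraic $K$-theory (with the family $\mathcal{VCyc}$ of virtually cyclic subgroups), while by \cite{bl1} the corresponding statement for $\mathbb{KH}$ holds already with the family $\mathcal{F}in$ of finite subgroups. I would also note that such a $G$ is torsion-free: in the situation of interest it acts freely, as a CAT(0)-cube group on the universal cover of $C(X_G)$, and in any case what the argument uses is that every infinite virtually cyclic subgroup $V\le G$ is infinite cyclic. Consequently $\mathbb{Z}[V]$ is one of $\mathbb{Z}$ or $\mathbb{Z}[t,t^{-1}]$, both regular Noetherian and hence $K$-regular, so the natural transformation $\mathbb{K}\to\mathbb{KH}$ of $\mathrm{Or}(G)$-spectra restricts to an objectwise weak equivalence over every orbit $G/V$ with $V\in\mathcal{VCyc}$.

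With these inputs in hand, I would compare the two assembly maps through the square
\[
\begin{CD}
H^G_\ast(E_{\mathcal{VCyc}}(G);\mathbb{K}(\mathbb{Z})) @>{\cong}>> K_\ast(\mathbb{Z}[G]) \\
@V{a}VV @VV{b}V \\
H^G_\ast(E_{\mathcal{VCyc}}(G);\mathbb{KH}(\mathbb{Z})) @>{\cong}>> KH_\ast(\mathbb{Z}[G])
\end{CD}
\]
induced by $\mathbb{K}\to\mathbb{KH}$ and the naturality of assembly. The top horizontal map is the $K$-theoretic Farrell--Jones isomorphism; the bottom one is an isomorphism because the $\mathbb{KH}$ Farrell--Jones statement holds over $\mathcal{F}in$ and, $\mathbb{KH}$ being insensitive to Nil-terms, the relative assembly $E_{\mathcal{F}in}\to E_{\mathcal{VCyc}}$ is an equivalence. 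The left-hand map $a$ is an isomorphism by the previous paragraph, since $\mathbb{K}\to\mathbb{KH}$ is an objectwise equivalence on the orbits indexing the equivariant homology. A diagram chase then forces $b$ to be an isomorphism for every $\ast$, which is exactly the assertion that $\mathbb{K}(\mathbb{Z}[G])\to\mathbb{KH}(\mathbb{Z}[G])$ is a weak equivalence.

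The main obstacle is the verification that $a$ is an isomorphism, i.e.\ that $\mathbb{K}(\mathbb{Z}[V])\simeq\mathbb{KH}(\mathbb{Z}[V])$ for the virtually cyclic subgroups $V$; equivalently, that the relative assembly from the finite to the virtually cyclic family carries no $K$-theoretic Nil-contribution. This is precisely where torsion-freeness is essential: it guarantees $V\cong\mathbb{Z}$, so that the only potential obstruction is the Bass Nil-group $NK_\ast(\mathbb{Z})$, which vanishes because $\mathbb{Z}$ is regular. Were $G$ allowed genuine torsion, the finite and dihedral-type virtually cyclic subgroups would in general contribute nonvanishing Nil- and $UNil$-terms and the conclusion would fail; thus the hypothesis is to be read as applying to the torsion-free CAT(0)-cube groups produced by Leary's construction.
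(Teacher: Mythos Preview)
Your argument is correct (with the torsion-freeness caveat you explicitly flag), but it follows a genuinely different route from the paper. The paper does not set up a comparison square of assembly maps; instead it invokes Weibel's right-half-plane spectral sequence $E^1_{pq}=N^pK_q(\mathbb{Z}[G])\Rightarrow KH_{p+q}(\mathbb{Z}[G])$ and kills the columns $p>0$ directly: each $N^pK_\ast(\mathbb{Z}[G])$ sits as a summand of $K_\ast(\mathbb{Z}[G\times\mathbb{Z}^p])$, the product $G\times\mathbb{Z}^p$ is again $\mathcal{F}CAT$, and the Bartels--L\"uck/Wegner isomorphism for that larger group forces the $NK$-summand to vanish (it has no counterpart on the source side of assembly). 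The spectral sequence then collapses to the edge isomorphism $K_\ast(\mathbb{Z}[G])\cong KH_\ast(\mathbb{Z}[G])$.

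Your approach trades this spectral-sequence computation for one extra external input, the $KH$--Farrell--Jones isomorphism for $G$ itself; note that \cite{bl1} as cited gives this only via the general implication ``$K$-FJ with coefficients $\Rightarrow$ $KH$-FJ'' together with \cite{bl2}/\cite{CW}, not as a standalone result for CAT(0)-groups, so your sentence attributing it to \cite{bl1} alone is slightly elliptical. What you gain is conceptual cleanliness and no need to pass to the auxiliary groups $G\times\mathbb{Z}^p$; what the paper gains is that it only ever appeals to the $K$-theoretic Farrell--Jones statement. Both arguments rest on torsion-freeness at the identical point: the paper's assertion that the $NK$-summands ``vanish in the domain'' unpacks to exactly your observation that $\mathbb{Z}[V]$ is $K$-regular for every virtually cyclic $V$, so your closing remark about how the hypothesis should be read applies equally to the paper's own proof.
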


\begin{proof}
For an arbitrary ring $A$, there exists a right half-plane spectral sequence
(cp.~\cite[Thm.~1.3]{cw2}]): 
\begin{equation*}
E_{pq}^{1}:=N^{p}K_{q}(A)\Rightarrow KH_{p+q}(A),\quad p\geq 0,q\in \mathbb{Z%
}.
\end{equation*}%
For $A=\mathbb{Z}[G]$ and $p>0$, the groups $N^{p}K_{\ast }(\mathbb{Z}[G])$
are summands of $K_{\ast }(\mathbb{Z}[G\times \mathbb{Z}^{p}])$. But if $G$
satisfies $\mathcal{F}CAT$, so does $G\times \mathbb{Z}^{p}$ for all $p\geq
0 $. Again, by the main result of \cite{bl2} and \cite{cw1}, these summands
identify isomorphically with the corresponding summands in the domain of the
Farrell-Jones assembly map, where they vanish. Thus for such groups, $%
N^{p}K_{\ast }(\mathbb{Z}[G])=0$ for all $p>0$, yielding the required
isomorphism on homotopy groups in all degrees.
\end{proof}

Thus the Farrell-Jones assembly map for $KH(_{-})$ - which for torsion-free
groups agrees with the classical assembly map $H_{\ast }(BG;\mathbb{K}(%
\mathbb{Z})\rightarrow KH_{\ast }(\mathbb{Z}[G])$ - is an isomorphism for $G$
satisfying $\mathcal{F}CAT$ (cf. \cite{cw1}). With this additional fact in
hand, the proof of Theorem 3 is complete. \vskip.2in

Unlike the reduced $C^{\ast }$ algebra, the full (or maximal) group $C^{\ast
}$ algebra is functorial with respect to arbitrary group homomorphisms. The
methods of the previous two theorems imply the following.

\begin{corollary}
There exist acyclic groups $G$ for which the assembly map 
\begin{equation*}
H_{\ast }^{G}(E_{\mathcal{F}in}(G);\mathbb{K}^{t})\rightarrow K_{\ast
}^{t}(C^{\ast }(G))
\end{equation*}%
fails to be an isomorphism, even rationally.
\end{corollary}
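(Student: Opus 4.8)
The plan is to produce the failing groups explicitly, exploiting the single feature that separates the maximal $C^*$-algebra from the reduced one: the assignment $G\mapsto C^*(G)$, and hence the assembly map for $K^t_*(C^*(-))$, is natural for \emph{every} group homomorphism, not merely the injective ones. This is exactly the input that was missing in Theorem 1 (compare the footnote there) but which powered the sharper conclusions of Theorems 2 and 3, since it lets one transport surjectivity along the retraction $G\hookrightarrow G\times A(T(G))\twoheadrightarrow G$. The idea is therefore to build, from a group with property $T$, an acyclic group sitting in the Mayer--Vietoris ladder of the earlier proofs, and to show that rational surjectivity of its assembly map would force rational surjectivity for the property $T$ group, which is impossible.

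Concretely, fix an infinite group $\Gamma$ with property $T$ and form $A_1=\Gamma\times A(T(\Gamma))$, $A_2=A(T(\Gamma))$ and $A_3=A_1*_{T(\Gamma)}A_2$ as in the proof above, now with $F_*(-)=K^t_*(C^*(-))$ and $HF_*(-)=H^{(-)}_*(E_{\mathcal{F}in};\mathbb{K}^t)$. First I would record that $A_3$ is acyclic: because $A(T(\Gamma))$ is acyclic one has $H_*(A_1)\cong H_*(\Gamma)$, the map $T(\Gamma)\to A_1$ induces $(p_\Gamma)_*$ on homology under this identification (an isomorphism), and the ordinary Mayer--Vietoris sequence then gives $\tilde H_*(A_3)=0$. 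Next, since $T(\Gamma)$ and $A_2$ lie in $\mathfrak{C}$ and are a-T-menable, hence K-amenable, their maximal assembly maps $\psi^T_*$ and $\psi^2_*$ are isomorphisms. The crucial unconditional observation is that $\psi^1_*$ cannot be a rational epimorphism: the first-factor retraction $\Gamma\hookrightarrow A_1\twoheadrightarrow\Gamma$ together with naturality of the maximal assembly map exhibits the assembly map of $\Gamma$ as a retract of that of $A_1$, so surjectivity of $\psi^1_*$ would descend to $\Gamma$, contradicting the failure of maximal Baum--Connes surjectivity for property $T$ groups. Since $\psi^2_*$ is an isomorphism, the block map $\psi^1_*\oplus\psi^2_*$ is then not surjective either; feeding this into the epimorphism four-lemma, using that $\psi^T_*$ is an isomorphism on both ends, forces $\psi^3_*$ to fail surjectivity. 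Thus the acyclic group $A_3$ has a non-surjective, a fortiori non-bijective, rational assembly map, which is the assertion of the corollary.

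The main obstacle is to install the two maximal-algebra inputs that stand in for the reduced-algebra results used earlier. First, exactness of the bottom row of the ladder now demands a Mayer--Vietoris sequence for $K_*(C^*(-))$ along the amalgam $A_3=A_1*_{T(\Gamma)}A_2$ in place of Pimsner's reduced statement \cite{mp}; I expect this to follow from the identification of the maximal group $C^*$-algebra of an amalgam with a full amalgamated free product of $C^*$-algebras, together with the K-amenability of $T(\Gamma)$ and $A_2$, which permits comparison with the reduced sequence. Second, the isomorphisms $\psi^T_*,\psi^2_*$ rest on maximal Baum--Connes for the a-T-menable $\mathfrak{C}$-groups, again obtained from Higson--Kasparov [HK] once K-amenability is used to pass between the full and reduced theories. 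The final ingredient, that an infinite property $T$ group carries a Kazhdan projection whose class in $K_0(C^*(\Gamma))$ lies outside the rational image of assembly, is classical and may simply be cited.
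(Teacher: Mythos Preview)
Your proposal is correct and follows essentially the same route as the paper. The paper's argument is the brief contrapositive --- full functoriality of $C^*(-)$ means the machinery of Theorems~2 and~3 would carry ``IC for all acyclic'' to ``IC for all'', which property~$T$ obstructs --- while you unwind this to exhibit the specific acyclic culprit $A_3$ and, via the epi four-lemma, even pin the failure on surjectivity; you are also more explicit than the paper about the two technical inputs (Mayer--Vietoris for $K_*(C^*(-))$ along the amalgam, and the maximal assembly isomorphism for the $\mathfrak{C}$-groups $T(\Gamma)$ and $A_2$) that the paper simply absorbs into the phrase ``the methods of the previous two theorems''.
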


\begin{proof}
Suppose that for all acyclic groups the assembly maps are isomorphims. A
similar proof as those of Theorem 3 and 4 gives that the assembly map 
\begin{equation*}
H_{\ast }^{G}(E_{\mathcal{F}in}(G);\mathbb{K}^{t})\rightarrow K_{\ast
}^{t}(C^{\ast }(G))
\end{equation*}%
is an isomorphism for any $G.$ It is proved by Lafforgue \cite{La} that for some
infinite group $K$ with Kazhdan's property (T), the Baum-Connes assembly map 
\begin{equation*}
H_{\ast }^{G}(E_{\mathcal{F}in}(K);\mathbb{K}^{t})\rightarrow K_{\ast
}^{t}(C_{r}^{\ast }(K))
\end{equation*}%
is an isomorphism. Since the latter map factors through the former (cp.~\cite[p.~83]{mv}), we have an isomorphism%
\begin{equation*}
K_{\ast }^{t}(C^{\ast }(K))\cong K_{\ast }^{t}(C_{r}^{\ast }(K)).
\end{equation*}%
However, it is well-known that for any infinite group with property (T) these
groups are not isomorphic, even rationally (cp. \cite[Cor.~3.1]{JV} and its
proof). This gives an contradiction.
\end{proof}

\vskip.2in

Finally we consider the statement of Theorem 4. Here the results of
Waldhausen \cite{fw} produce a Mayer-Vietoris type of long-exact sequence
which appears as the bottom row in the commuting diagram
\vskip.1in
\centerline{
\resizebox{6.5in}{.125in}{
%\xymatrixcolsep{.09in}
%\tiny{
\xymatrix{
\dots HFS_{n+1}(A_3)\ar[r]^{\partial}\ar[d]^{\phi^3_{n+1}}
& HFS_n(T(G))\ar[r]\ar[d]^{\phi^T_n} & HFS_n(A_1)\oplus HFS_n(A_2)\ar[r]\ar[d]^{\phi^1_n\oplus \phi^2_n} & HFS_n(A_3)\ar[r]^(0.425){\partial}\ar[d]^{\phi^3_n}
& HFS_{n-1}(T(G))\ar[d]^{\phi^T_{n-1}} \dots\\
\dots K_{n+1}(S[A_3])\ar[r]^(0.35){\partial}
& K_n(S[T(G)])\oplus Nil_n(T(G),A_1,A_2)\ar[r] & K_n(S[A_1])\oplus K_n(S[A_2])\ar[r] & K_n(S[A_3])\ar[r]^(0.27){\partial} & K_{n-1}(S[T(G)])\oplus Nil_n(T(G),A_1,A_2) \dots
}}
}

 \vskip.1in

Assume that the Farrell-Jones assembly map is an isomorphism for any acyclic
group. Then $\phi _{\ast }^{2}$ and $\phi _{\ast }^{3}$ are isomorphisms.
When either $\mathbb{Q}\subset S$, or $\mathbb{K}$ represents rationalized
algebraic $K$-theory with $S=\mathbb{Z}$, the Farrell-Jones assembly map is
injective for any group in the Waldhausen-Cappell class $\mathfrak{C}$ \cite%
{bl1}. With $\phi _{\ast }^{T}$ injective, the map $\phi _{n}^{1}$ is
injective by a diagram chase. This shows that the kernel 
\begin{equation*}
\mathrm{Ker}(H_{n}^{G}(E_{\mathcal{F}in}(G);\mathbb{K})\rightarrow
K_{n}(S[G]))\subset \mathrm{Ker}(\phi _{n}^{1})
\end{equation*}
is trivial. When $G\in \mathcal{FF}$, we produce $A_{1},A_{2}$ and $A_{3}$
using locally CAT(0) cubical complexes as before. The group $\pi
_{1}(C(X_{G}))$ acts properly and cocompactly on the universal cover of $%
C(X_{G}),$ which is a CAT(0) cubical complex. According to \cite{cw1}, the
Farrell-Jones conjecture is true for $\pi _{1}(C(X_{G}))$ with any
coefficients. Using a similar diagram chasing, we see that $\mathrm{Ker}%
(H_{n}^{G}(E_{\mathcal{F}in}(G);\mathbb{K})\rightarrow K_{n}(S[G]))=0$ in
(3). The rational algebraic $K$-theory with $R=\mathbb{Z}$ is proved
similarly, completing the proof of Theorem 4. \vskip.2in

For a torsion-free acyclic group $A$, there are isomorphisms $H_{n}^{A}(E_{%
\mathcal{F}in}(A);\mathbb{F})\cong H_{n}(\mathrm{B}A;\mathbb{F}(A/e))\cong
H_{n}(\mathrm{pt};\mathbb{F}(A/e)),$ where $e$ denotes the trivial subgroup
of $A$. This implies that the assembly map is injective for a torsion-free
acyclic group. Therefore we have the following.

\begin{corollary}
Following Theorems 1 and 4,

\begin{enumerate}
\item[(1)] The Baum-Connes conjecture is true for every torsion-free group
if and only if the Baum-Connes assembly map is an epimorphism for every
torsion-free group.

\item[(2)] Let $S$ be a regular ring with $\mathbb{Q}\subset S.$ The
Farrell-Jones conjecture with coefficients in $S$ (resp. the rational
Farrell-Jones conjecture with coefficients in $\mathbb{Z}$) holds for every
torsion-free group if and only if the integral (resp. rational) assembly map
is an epimorphism for every torsion-free group.

\item[(3)] Let $S$ be a regular ring. The Farrell-Jones conjecture is true
for every $FF$ group (with coefficients in $S$) if and only if the assembly
map is an epimorphism for every $FF$ group (with coefficients in $S$).
\end{enumerate}
\end{corollary}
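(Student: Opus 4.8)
The plan is to prove each biconditional by treating its two implications separately. The implication from the relevant Isomorphism (Baum-Connes, Farrell-Jones) Conjecture to the corresponding Epimorphism statement is immediate, since every isomorphism is in particular an epimorphism; thus the entire content lies in the reverse implication, that surjectivity of the assembly map for every group in the class forces bijectivity for every group in the class.

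For the reverse implication I would exploit the observation recorded immediately above: for a torsion-free acyclic group $A$ the domain of the assembly map satisfies $H_n^A(E_{\mathcal{F}in}(A);\mathbb{F}) \cong H_n(\mathrm{pt};\mathbb{F}(A/e))$, so the assembly map is automatically injective. Fix $\mathcal{C} = \mathcal{TF}$ or $\mathcal{FL}$ and assume the assembly map is an epimorphism for every group in $\mathcal{C}$. In particular it is an epimorphism for every acyclic group in $\mathcal{C}$; since $\mathcal{FL} \subseteq \mathcal{TF}$, every such group is torsion-free, and the observation makes it injective there as well. Hence the assembly map is an isomorphism for every acyclic group in $\mathcal{C}$ --- exactly the hypothesis demanded by Theorems 1 and 4.

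I would then feed this into the appropriate theorem to produce a monomorphism over the whole class, and combine it with the standing epimorphism hypothesis to obtain an isomorphism over the whole class. For part (1), Theorem 1 yields that the Baum-Connes assembly map is a monomorphism for every group in $\mathcal{TF}$. For part (2), the integral statement follows from part (2) of Theorem 4, taking its coefficient ring $S$ to be the regular ring $R \supseteq \mathbb{Q}$ of the corollary and its subring of $\mathbb{Q}$ to be $\mathbb{Z}$, while the rational statement follows from part (1) of Theorem 4 with $S = \mathbb{Z}$ and subring $\mathbb{Q}$. For part (3), part (3) of Theorem 4, with coefficient ring $S$ the regular ring $R$ of the corollary, yields a monomorphism over $\mathcal{FL}$. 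In each case the monomorphism over the class together with the assumed epimorphism gives the asserted isomorphism, and the matching of the conditions on $R$ and on regularity is exactly what dictates which part of Theorem 4 is invoked.

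There is no deep obstacle here; the single point demanding care, and the reason the statement is confined to torsion-free and $FL$ groups, is the passage from the epimorphism hypothesis to an isomorphism on acyclic groups. This step rests entirely on the unconditional injectivity above, which is available only because acyclic groups in $\mathcal{TF}$ and $\mathcal{FL}$ are torsion-free; for an acyclic group with torsion the domain $H_n^A(E_{\mathcal{F}in}(A);\mathbb{F})$ receives contributions from nontrivial finite subgroups and need not collapse to the coefficient groups, so the argument would fail. That acyclicity alone is insufficient is illustrated by the preceding corollary, where the full $C^*$-algebraic assembly map fails even to be surjective for suitable acyclic groups.
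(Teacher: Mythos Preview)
Your argument is correct and is exactly the approach the paper takes: the paper's entire proof of this corollary is the one-sentence observation preceding it (that the assembly map is automatically injective for torsion-free acyclic groups because the domain collapses to $H_n(\mathrm{pt};\mathbb{F}(A/e))$), followed by the word ``Therefore''. You have simply unpacked that word, correctly matching each part of the corollary to the corresponding part of Theorems~1 and~4 and disambiguating the two distinct roles played by a ring called $R$ (coefficient ring versus localizing subring of $\mathbb{Q}$).
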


\begin{remark}{\rm
It is currently unknown whether the original Baum-Connes Conjecture holds
for CAT(0)-groups of the type considered by Bartels and L\"uck in \cite{bl2}%
. However, based on the results of \cite{gy} and \cite{bcgnw}, it seems
plausible that similar results as those above can be obtained for the Coarse
Baum-Connes Conjecture. We hope to address these issues more completely in
future work.}
\end{remark}

\vskip.1in

\bigskip

\noindent \textbf{Acknowledgements }We would like to thank Ian Leary for
helpful correspondence during the preparation of this paper. We also thank
the referee for detailed comments on a previous version. The second author
is supported by Jiangsu Natural Science Foundation No. BK20140402 and NSFC
No. 11501459.\newpage\


\begin{thebibliography}{BCGNW}
\bibitem{bl1} A. Bartels, W. L\"{u}ck, \textit{Isomorphism Conjecture
for homotopy $K$-theory and groups acting on trees}, J. Pure Appl. Algebra 
\textbf{205} (2006), 660-696.

\bibitem{bl2} A. Bartels, W. L\"{u}ck, \textit{The Borel Conjecture for
hyperbolic and CAT(0)-groups}, Annals of Mathematics \textbf{175} (2012)
631-689.

\bibitem{blr} A. Bartels, W. L\"{u}ck and H. Reich, \textit{On the
Farrell-Jones conjecture and its applications}, J. Topol. \textbf{1} (2008),
57-86.

%\bibitem{bc} P. Baum, A. Connes, \textit{Chern character for discrete
%groups}, \underbar{A Fete of topology} (Academic Press, Boston) (1988),
%163-232.

%\bibitem{bch} P. Baum, A. Connes, N. Higson, \textit{Classifying space
%for proper actions and K-theory of group C*-algebras}, 
%\underbar{C*-algebras
%1943 - 1993, A fifty-year Celebration}, Contemp. Math. 167, (Amer. Math.
%Soc., Providence) (1994) 240-291.

\bibitem{bcm} A. J. Berrick, I. Chatterji and G. Mislin, \textit{From
acyclic groups to the Bass conjecture for amenable groups}, Math Annalen 
\textbf{329} (2004), 597-621.

\bibitem{jb} J. Block, \textit{Some remarks concerning the Baum-Connes
conjecture}, Comm. Pure Appl. Math. \textbf{50} (1997), 813-820.

\bibitem{bcgnw} J. Brodski, S.Campbell, E. Guentner, G. Niblo, N.
Wright, \textit{Property A and CAT(0) cube complexes}, Journal of Functional
Analysis, \textbf{256}, (2009), 1408-1431.

\bibitem{sc1} S. Cappell, \textit{A splitting theorem for manifolds},
Inven. Math. \textbf{33} (1976), 69 -- 170.

\bibitem{sc2} S. Cappell, \textit{On the homotopy invariance of higher
signatures}, Inven. Math. \textbf{33} (1976), 171-179.

\bibitem{dl} J. Davis, W. L\"{u}ck, \textit{Spaces over a category and
assembly maps in isomorphism conjectures in K- and L-theory}, $K$-theory 
\textbf{15} (1998), 201-252.

\bibitem{hk} N. Higson and G. Kasparov, E\textit{-Theory and }KK\textit{%
-Theory for groups which act properly and isometrically on Hilbert space},
Invent. Math. \textbf{144} (1) (2001), 23-74.

\bibitem{jor} R. Ji, C. Ogle, B. Ramsey, \textit{${\mathcal{B}}$%
-bounded Cohomology and Applications}, International Journal of Algebra and
Computation \textbf{23}, (2013) 147-204.

\bibitem{JV} P. Julg, A. Valette. K-theoretic amenability of $SL_{2}(%
\mathbb{Q}_{p})$, and the action on the associated tree. Journ. Funct. Anal.
58(1984), 194-215.

\bibitem{La} V. Lafforgue. Une d\'{e}monstration de la conjecture de
Baum-Connes pourles groupes r\'{e}ductifs sur un corps p-adique et pour
certains groupes discrets poss\'{e}dant la propri\'{e}t\'{e} (T). C. R.
Acad. Sci. Paris S\'{e}r. I Math., 327(1998): 439-444.

\bibitem{il} I. Leary, \textit{A metric Kan-Thurston theorem}, Journal
of Topology, \textbf{6 }(2012), 251-284.

\bibitem{lr} W. L\"{u}ck and H. Reich, \textit{The Baum-Connes and the
Farrell-Jones Conjectures in }$K$\textit{- and }$L$\textit{-theory},
Handbook of K-theory Volume 2, editors: E.M. Friedlander, D.R. Grayson,
Springer, 2005.

\bibitem{mv} G. Mislin and A. Valette, \textit{Proper Group Actions and
the Baum-Connes Conjecture}, Basel: Birk\"{a}user (2003).

\bibitem{NR} G. Niblo and I. Reeves, \textit{Groups acting on cubes and
Kazhdan's property (T)}. Proceedings of the American Mathematical Society, 
\textbf{126}, (1996), 693-699.

\bibitem{oo} H. Oyono-Oyono, \textit{Baum-Connes conjecture and group
actions on trees}, $K$-theory \textbf{24} (2001), 115-134.

\bibitem{mp} M. Pimsner, \textit{K-groups of crossed products with
groups acting on trees}, Inven. Math. \textbf{86} (1986), 603-634.

\bibitem{fw} F. Waldhausen, \textit{Algebraic $K$-theory of generalized
free products I,II}, Ann. of Math. \textbf{108} (1978), 135-204.

\bibitem{cw1} C. Wegner, \textit{The }$K$\textit{-theoretic
Farrell-Jones conjecture for CAT(0)-groups}, Proc. Amer. Math. Soc. \textbf{%
140} (2012), 779-793.

\bibitem{cw2} C. Weibel, \textit{Homotopy algebraic $K$-theory}, in %
\underbar{Algebraic $K$-theory and algebraic number theory}, Contemp. Math. 
\textbf{83} (1987), 461-488.

\bibitem{gy} G. Yu, \textit{Coarse Baum-Connes Conjecture}, $K$-theory 
\textbf{9} (1995), 199-221.
\end{thebibliography}
\end{document}